\newtheorem{theorem}{Theorem}
\theoremstyle{definition}
\theoremstyle{remark}
\def\N{\mathbb{N}}
\def\Z{\mathbb{Z}}
\def\Q{\mathbb{Q}}
\def\R{\mathbb{R}}
\def\hN{{{}^*\N}}
\def\hZ{{{}^*\Z}}
\def\hQ{{{}^*\Q}}
\def\hR{{{}^*\R}}
\def\hhN{{}^{**}\N}
\def\hA{{{}^*\!A}}
\def\hB{{{}^*\!B}}
\def\hC{{{}^*\!C}}
\def\hX{{{}^*X}}
\def\hY{{{}^*Y}}
\def\hf{{{}^*\!f}}
\def\F{\mathcal{F}}
\def\U{\mathcal{U}}
\def\V{\mathcal{V}}
\def\UU{\mathfrak{U}}
\newcommand{\fe}{\ensuremath{\leq_{\text{fe}}}}
\begin{document}

\title[Nonstandard methods in combinatorics of numbers]
{A taste of nonstandard methods
in combinatorics of numbers}

\author{Mauro Di Nasso}

\address{Dipartimento di Matematica\\
Universit\`a di Pisa, Italy}

\email{dinasso@dm.unipi.it}

\date{}

\begin{abstract}
\emph{By presenting the proofs of a few sample results,
we introduce the reader to the use of nonstandard analysis
in aspects of combinatorics of numbers.}
\end{abstract}

\subjclass[2000]
{03H05; 11B05; 05D10.}

\keywords{Nonstandard analysis, Density of sets of integers,
Ramsey theory, Partition regularity}

\maketitle

\section*{Introduction}

In the last years, several combinatorial results about sets of
integers that depend on their asymptotic densities have been
proved by using the techniques of nonstandard analysis,
starting from the pioneering work by R. Jin
(see \emph{e.g.} \cite{jin0,jin1,jin2,jin3,jin4,dn1,dgjllm1,dgjllm2}).
Very recently, the hyper-integers of nonstandard analysis have also been
used in Ramsey theory to investigate the partition regularity of
possibly non-linear diophantine equations (see \cite{dn1,lu}).

\smallskip
The goal of this paper is to give a soft introduction to
the use of nonstandard methods in certain areas of density
problems and Ramsey theory. To this end, we will focus on a
few sample results, aiming to give the
flavor of how and why nonstandard techniques could be
successfully used in this area.

\smallskip
Grounding on nonstandard definitions of the involved
notions, the presented proofs consist of arguments
that can be easily followed by the intuition
and that can be taken at first as heuristic reasonings.
Subsequently, in the last foundational section, we will outline
an algebraic construction of the hyper-integers,
and give hints to show how those nonstandard arguments are in fact
rigorous ones when formulated in the appropriate language.
We will also prove that all the nonstandard definitions
presented in this paper are actually
equivalent to the usual ``standard" ones.

\smallskip
Two disclaimers are in order. Firstly, this paper
is not to be taken as a comprehensive presentation
of nonstandard methods in combinatorics, but only
as a taste of that area of research.
Secondly, the presented results are only examples of
``first-level" applications of the nonstandard machinery;
for more advanced results one needs higher-level nonstandard tools,
such as saturation and Loeb measure, combined with
other non-elementary mathematical arguments.

\bigskip
\section{The hyper-numbers of nonstandard analysis}

This introductory section contains an informal description of the basics of
nonstandard analysis, starting with the hyper-natural numbers.
Let us stress that what follows are not rigorous definitions and results,
but only informal discussions aimed to help the intuition
and provide the essential tools to understand the rest of the paper.\footnote
{~A model for the introduced notions will be constructed
in the last section.}

\smallskip
One possible way to describe the hyper-natural numbers
$\hN$ is the following:

\medskip
\begin{itemize}
\item
The \emph{hyper-natural numbers} $\hN$ are the natural numbers when
seen with a ``telescope" which allows to also see
infinite numbers beyond the usual finite ones.
The structure of $\hN$ is essentially the same as $\N$,
in the sense that $\hN$ and $\N$ cannot be distinguished
by any ``elementary property".
\end{itemize}

\medskip
Here by \emph{elementary property} we mean a property that
talks about elements but \emph{not} about subsets\footnote
{~In logic, this kind of properties are
called \emph{first-order} properties.},
and where no use of the notion of infinite or finite
number is made.

\smallskip
In consequence of the above, the order structure of $\hN$ is clear.
After the usual finite numbers $\N=\{1,2,3,\ldots\}$,
one finds the infinite numbers $\xi>n$ for all $n\in\N$.
Every $\xi\in\hN$ has a successor $\xi+1$,
and every non-zero $\xi\in\hN$ has a predecessor $\xi-1$.
$${}^*\N\ =\ \big\{\underbrace{1,2,3,\ldots,n,\ldots}_{\text{\small finite numbers}}
\quad \underbrace{\ldots, N-2, N-1, N, N+1, N+2, \ldots}_{\text{\small infinite numbers}}\
\big\}$$

\smallskip
Thus the set of finite numbers $\N$ has not a
greatest element and the set of infinite numbers
$\N_\infty=\hN\setminus\N$ has not a least element,
and hence $\hN$ is \emph{not} well-ordered.
Remark that being a well-ordered set is not an ``elementary
property" because it is about subsets, not elements.\footnote
{~In logic, this kind of properties are
called \emph{second-order} properties.}

\medskip
\begin{itemize}
\item
The \emph{hyper-integers} $\hZ$ are the discretely ordered
ring whose positive part is the semiring $\hN$.

\smallskip
\item
The \emph{hyper-rationals} $\hQ$ are the ordered
field of fractions of $\hZ$.
\end{itemize}

\medskip
Thus $\hZ=-\hN\cup\{0\}\cup\hN$, where $-\hN=\{-\xi\mid\xi\in\hN\}$
are the negative hyper-integers.
The hyper-rational numbers $\zeta\in\hQ$ can be represented
as ratios $\zeta=\frac{\xi}{\nu}$ where $\xi\in\hZ$ and $\nu\in\hN$.

\smallskip
As the next step, one considers the hyper-real numbers,
which are instrumental in nonstandard calculus.

\medskip
\begin{itemize}
\item
The \emph{hyper-reals} $\hR$ are an ordered field
that properly extends both $\hQ$ and $\R$.
The structures $\R$ and $\hR$ satisfy the same
``elementary properties".
\end{itemize}

\medskip
As a proper extension of $\R$, the field $\hR$
is \emph{not} Archimedean, \emph{i.e.} it contains
non-zero \emph{infinitesimal} and \emph{infinite} numbers.
(Recall that a number $\varepsilon$ is infinitesimal if
$-1/n<\varepsilon<1/n$ for all $n\in\N$; and a number
$\Omega$ is infinite if $|\Omega|>n$ for all $n$.)
In consequence, the field $\hR$ is \emph{not} complete:
\emph{e.g.}, the bounded set of
infinitesimals has not a least upper bound.\footnote
{~Remark that the property of completeness is \emph{not} elementary,
because it talks about subsets and not about elements of the
given field. Also the Archimedean property is \emph{not} elementary,
because it requires the notion of \emph{finite} hyper-natural number
to be formulated.}

\smallskip
Each set $A\subseteq\R$ has its
\emph{hyper-extension} $\hA\subseteq\hR$, where $A\subseteq\hA$.
\emph{E.g.}, one has the set of hyper-even numbers,
the set of hyper-prime numbers, the set of hyper-irrational numbers,
and so forth. Similarly, any function $f:A\to B$ has
its \emph{hyper-extension} $\hf:\hA\to\hB$, where
$\hf(a)=f(a)$ for all $a\in A$.
More generally, in nonstandard analysis one considers
hyper-extensions of arbitrary sets and functions.

\smallskip
The general principle that hyper-extensions are indistinguishable
from the starting objects as far as their ``elementary properties" are
concerned, is called \emph{transfer principle}.

\medskip
\begin{itemize}
\item
\emph{Transfer principle}:\
An ``elementary property" $P$ holds for the
sets $A_1,\ldots,A_k$ and the
functions $f_1,\ldots,f_h$ if and only if $P$ holds
for the corresponding hyper-extensions:
$$\quad\quad\quad\ \
P(A_1,\ldots,A_k,f_1,\ldots,f_h)\ \Longleftrightarrow\
P(\hA_1,\ldots,\hA_k,{}^*\!f_1,\ldots,{}^*\!f_h)$$
\end{itemize}

\medskip
Remark that all basic set properties are elementary,
and so $A\subseteq B\Leftrightarrow \hA\subseteq\hB$,
$A\cup B=C\Leftrightarrow\hA\cup\hB=\hC$,
$A\setminus B=C\Leftrightarrow \hA\setminus\hB=\hC$, 
and so forth.

\smallskip
As direct applications of \emph{transfer} one
obtains the following facts:
The hyper-rationals $\hQ$ are \emph{dense} in the hyper-reals $\hR$;
every hyper-real number $\xi\in\hR$ has an an \emph{integer part}, \emph{i.e.}
there exists a unique hyper-integer $\mu\in\hZ$ such that
$\mu\le\xi<\mu+1$; and so forth.

\smallskip
As our first example of nonstandard reasoning, let us see
a proof of K\"onig's Lemma, one of the
oldest results in infinite combinatorics.

\medskip
\begin{theorem}[K\"onig's Lemma -- 1927]
If a finite branching tree has infinitely many
nodes, then it has an infinite branch.
\end{theorem}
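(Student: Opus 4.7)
The plan is to use the hyper-extension $\hT$ of the tree $T$ to exhibit an infinite branch by first locating a node at an \emph{infinite} level and then descending back through its ancestors. Since ``being the parent of'' and ``being at level $n$'' are elementary relations, they transfer faithfully to $\hT$, and this is what will allow us to recover a genuine infinite branch of $T$ from a single hyper-node.

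First I would set up the combinatorial preliminaries: fix a level function $\ell:T\to\N$ and let $T_n=\{t\in T:\ell(t)\le n\}$. Because $T$ is finitely branching, an easy induction shows each $T_n$ is finite. Hence if $T$ has infinitely many nodes, for every $n\in\N$ there exists some $t\in T$ with $\ell(t)>n$. This is the single elementary fact I will feed into transfer.

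Next, apply the transfer principle to conclude that for every $\nu\in\hN$ there exists $\tau\in\hT$ with ${}^*\ell(\tau)>\nu$. Picking any infinite $N\in\hN\setminus\N$ yields a hyper-node $\tau\in\hT$ whose level is an infinite hyper-natural. For each finite $n\in\N$, define $b_n$ to be the ancestor of $\tau$ at level $n$ (the ``ancestor at level $n$'' function on $T$ is well-defined on nodes of level $\ge n$ and transfers to $\hT$). The crucial observation is that the set of level-$n$ nodes of $T$ is finite, so it coincides with its own hyper-extension; therefore each $b_n$ actually lies in $T$, not merely in $\hT$. Finally, the elementary relation ``$b_n$ is the parent of $b_{n+1}$'' transfers from $\hT$ to $T$, so $(b_n)_{n\in\N}$ is an honest infinite branch.

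The main subtlety, and where I would be most careful, is the step that pulls the ancestors $b_n$ back into the standard tree. One must emphasize that finiteness of the set of level-$n$ nodes is what forces its hyper-extension to add nothing new; without finite branching, the $b_n$ could be genuine nonstandard objects and the argument would collapse. Everything else is a clean application of transfer, and the overall proof can be read as: ``an infinite tree must reach infinity, so in $\hT$ it reaches an infinite level, and climbing down from there produces the branch.''
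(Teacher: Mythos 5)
Your proposal is correct and follows the same route as the paper: find a node at an infinite hyper-level of $\hT$, then take its chain of ancestors back in $T$. The paper states this step tersely as ``$\{t\in T\mid t\le\tau\}$ is an infinite branch,'' whereas you helpfully make explicit the key point it leaves implicit, namely that each level-$n$ slice of $T$ is finite and therefore equals its own hyper-extension, which is exactly what forces the ancestors of $\tau$ to land in the standard tree.
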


\begin{proof}[Nonstandard proof]
Given a finite branching tree $T$, consider the sequence of
its finite levels $\langle T_n\mid n\in\N\rangle$, and
let $\langle T_\nu\mid \nu\in\hN\rangle$ be its hyper-extension.
By the hypotheses, it follows that
all finite levels $T_n\ne\emptyset$ are nonempty. Then, by \emph{transfer},
also all ``hyper-levels" $T_\nu$ are nonempty.
Pick a node $\tau\in T_\nu$ for some infinite $\nu$.
Then $\{t\in T\mid t\le \tau\}$ is an infinite branch of $T$.
\end{proof}

\bigskip
\section{Piecewise syndetic sets}

A notion of largeness used in combinatorics
of numbers is the following.

\medskip
\begin{itemize}
\item
A set of integers $A$ is \emph{thick} if it includes arbitrarily
long intervals:
$$\forall n\in\N\ \exists x\in\Z\ \ [x,x+n)\subseteq A.$$
\end{itemize}

\medskip
In the language of nonstandard analysis:

\medskip
\noindent
\textbf{Definition.}\ [Nonstandard]\
$A$ is \emph{thick} if $I\subseteq\hA$
for some infinite interval $I$.

\medskip
By \emph{infinite interval} we mean an interval
$[\nu,\mu]=\{\xi\in\hZ\mid \nu\le\xi\le\mu\}$
with infinitely many elements or, equivalently,
an interval whose length $\mu-\nu+1$ is an infinite number.

\smallskip
Another important notion is that of syndeticity. It stemmed from dynamics,
corresponding to finite return-time in a discrete setting.

\medskip
\begin{itemize}
\item
A set of integers $A$ is \emph{syndetic} if it has bounded gaps:
%\emph{i.e.} if there exists $k\in\N$ such that every interval
%of length $k$ meets $A$:
$$\exists k\in\N\ \forall x\in\Z\ \ [x,x+k)\cap A\ne\emptyset.$$
\end{itemize}

\medskip
So, a set is syndetic means that its complement is not thick.
In the language of nonstandard analysis:

\medskip
\noindent
\textbf{Definition.}\ [Nonstandard]\
$A$ is \emph{syndetic} if $\hA\cap I\neq\emptyset$
for every infinite interval $I$.

\medskip
The fundamental structural property considered in Ramsey theory
is that of partition regularity.

\medskip
\begin{itemize}
\item
A family $\F$ of sets is \emph{partition regular} if
whenever an element $A\in\F$ is finitely
partitioned $A=A_1\cup\ldots\cup A_n$, then at least one piece $A_i\in\F$.
\end{itemize}

\medskip
Remark that the family of syndetic sets fails to be partition regular.\footnote
{~\emph{E.g.}, consider the partition of the integers determined by
$$A=\bigcup_{n\in\N}[-2^{2n},-2^{2n-1})\cup\bigcup_{n\in\N}[2^{2n-1},2^{2n})$$
and its complement $\Z\setminus A$, neither of which is syndetic.}
However, a suitable weaking of syndeticity satisfies the property.

\medskip
\begin{itemize}
\item
A set of integers $A$ is \emph{piecewise syndetic} if
$A=T\cap S$ where $T$ is thick and $S$ is syndetic;
\emph{i.e.}, $A$ has bounded gaps on arbitrarily large intervals:
$$\exists k\in\N\ \forall n\in\N\ \exists y\in\Z\ \forall x\in\Z\ \
[x,x+k)\subseteq[y,y+n)\Rightarrow [x,x+k)\cap A\ne\emptyset.$$
\end{itemize}

\medskip
In the language of nonstandard analysis:

\medskip
\noindent
\textbf{Definition.}\ [Nonstandard]\
$A$ is \emph{piecewise syndetic} (PS for short) if there exists an infinite
interval $I$ such that $\hA\cap I$ has only finite gaps,
\emph{i.e.} $\hA\cap J\ne\emptyset$ for every infinite subinterval $J\subseteq I$.

\medskip
Several results suggest the notion of piecewise
syndeticity as a relevant one in combinatorics of numbers. \emph{E.g.},
the sumset of two sets of natural numbers having positive
density is piecewise syndetic\footnote
{~This is \emph{Jin's theorem}, proved in 2000 by using
nonstandard analysis (see \cite{jin1}).};
every piecewise syndetic set contains arbitrarily long arithmetic progressions;
a set is piecewise syndetic if and only if it belongs
to a minimal idempotent ultrafilter\footnote{~See \cite{hs} \S 4.4.}.

\medskip
\begin{theorem}
The family of PS sets is partition regular.
\end{theorem}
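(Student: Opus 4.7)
The plan is to use the nonstandard characterization directly, reducing by induction to the case of a two-piece partition $A = A_1 \cup A_2$. Suppose $A$ is PS, so there is an infinite interval $I \subseteq \hZ$ such that ${}^*A \cap J \neq \emptyset$ for every infinite subinterval $J \subseteq I$. By transfer, ${}^*A = {}^*A_1 \cup {}^*A_2$, so our task is to locate an infinite subinterval of $I$ on which one of the pieces ${}^*A_i$ itself has bounded gaps.

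The key move is to unwind what it means for a piece to \emph{fail} to be PS. If $A_1$ is not PS, then for every infinite interval (in particular for $I$ itself) there exists an infinite subinterval missing ${}^*A_1$ entirely; pick such a subinterval $J \subseteq I$, so ${}^*A_1 \cap J = \emptyset$. Now take any infinite subinterval $K \subseteq J$: since $K$ is also an infinite subinterval of $I$, we have ${}^*A \cap K \neq \emptyset$; but ${}^*A \cap K = ({}^*A_1 \cup {}^*A_2) \cap K = {}^*A_2 \cap K$, so ${}^*A_2 \cap K \neq \emptyset$. This says exactly that $J$ witnesses that $A_2$ is PS. Hence either $A_1$ is PS, or $A_2$ is, completing the case $n = 2$; the general case follows by grouping $A_2 \cup \ldots \cup A_n$ and iterating.

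The main conceptual step, and the only place any real work happens, is recognizing that ``not PS'' is the right kind of negation to use: it produces an infinite subinterval on which ${}^*A_1$ is \emph{entirely absent}, which is a much stronger conclusion than merely having an infinite gap, and is precisely what is needed to transfer the PS witness from $A$ to $A_2$. I do not anticipate any serious obstacle; there is no need to invoke saturation or any deeper nonstandard tools, since the definition of PS is already expressed in terms of a single infinite interval, and the argument proceeds by directly restricting that interval.
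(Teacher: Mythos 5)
Your proof is correct and follows essentially the same route as the paper: pass to the hyper-extension, locate the infinite interval $I$ witnessing that $A$ is PS, and observe that if ${}^*A_1$ has an infinite gap $J\subseteq I$ then ${}^*A_2$ must hit every infinite subinterval of $J$, making $J$ a witness for $A_2$. The only cosmetic difference is how you set up the dichotomy (you contrast ``$A_1$ is PS'' with ``$A_1$ is not PS,'' whereas the paper contrasts ``${}^*A_1$ has bounded gaps in $I$'' with its negation), but this leads to exactly the same case analysis and the same key observation about $J$.
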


\begin{proof}[Nonstandard proof]
By induction, it is enough to check the
property for $2$-partitions. So, let us assume
that $A=\text{BLUE}\cup\text{RED}$ is a PS set; we have
to show that $\text{RED}$ or $\text{BLUE}$ is PS.
We proceed as follows:

\smallskip
\begin{itemize}
\item
Take the hyper-extensions $\hA={}^*\text{BLUE}\cup{}^*\text{RED}$.

\smallskip
\item
By the hypothesis, we can pick an infinite
interval $I$ where $\hA$ has only finite gaps.

\smallskip
\item
If the ${}^*\text{blue}$ elements of $\hA$ have only
finite gaps in $I$, then $\text{BLUE}$ is piecewise syndetic.

\smallskip
\item
Otherwise, there exists an infinite interval $J\subseteq I$
that only contains ${}^*\text{red}$ elements of ${}^*A$.
But then ${}^*\text{RED}$ has only finite gaps in $J$, and
hence $\text{RED}$ is piecewise syndetic.
\end{itemize}

\vspace{-.55cm}
\end{proof}

\bigskip
\section{Banach and Shnirelmann densities}

An important area of research in number theory focuses on
combinatorial properties
of sets which depend on their density. Recall the following notions:

\medskip
\begin{itemize}
\item
\smallskip
The \emph{upper asymptotic density} $\overline{d}(A)$ of a set $A\subseteq\N$
is defined by putting:
$$\overline{d}(A)\ =\ \limsup_{n\to\infty}\frac{|A\cap[1,n]|}{n}\,.$$

\smallskip
\item
The \emph{upper Banach density} $\text{BD}(A)$ of a set
of integers $A\subseteq\Z$
generalizes the upper density by considering arbitrary intervals
in place of just initial intervals:
$$\text{BD}(A)\ =\
\lim_{n\to\infty}\left(\max_{x\in\Z}\frac{|A\cap[x+1,x+n]|}{n}\right)\ =\
\inf_{n\in\N}\left\{\max_{x\in\Z}\frac{|A\cap[x+1,x+n]|}{n}\right\}.$$
\end{itemize}

\medskip
In order to translate the above definitions in the language of nonstandard
analysis, we need to introduce new notions.

\smallskip
In addition to hyper-extensions, a larger class of well-behaved
subsets of $\hZ$ that is considered in nonstandard analysis
is the class of \emph{internal} sets. All sets that can be ``described"
without using the notions of finite or infinite number are internal.
Typical examples are the intervals
$$[\xi,\zeta]=\{x\in\hZ\mid \xi\le x\le\zeta\}\,;\ \
[\xi,+\infty)=\{x\in\hZ\mid \xi\ge x\}\,;\ \ \textit{etc}.$$

Also finite subsets $\{\xi_1,\ldots,\xi_n\}\subset\hZ$ are internal,
as they can be described by simply giving the (finite)
list of their elements. Internal subsets of $\hZ$
share the same ``elementary properties" of the subsets of $\Z$.
\emph{E.g.}, every nonempty internal subset of $\hZ$ that is bounded
below has a least element; in consequence, the set
$\N_\infty$ of infinite hyper-natural numbers is \emph{not} internal.
Internal sets are closed under unions, intersections, and relative
complements. So, also the set of finite numbers $\N$ is \emph{not} internal,
as otherwise $\N_\infty=\hN\setminus\N$ would be internal.

\smallskip
Internal sets are either \emph{hyper-infinite}
or \emph{hyper-finite}; for instance, all intervals $[\xi,+\infty)$
are hyper-infinite, and all intervals $[\xi,\zeta]$ are hyper-finite.
Every nonempty hyper-finite set $A\subset\hZ$ has its \emph{internal cardinality}
$\|A\|\in\hN$; for instance $\|[\xi,\zeta]\|=\zeta-\xi+1$.
Internal cardinality and the usual cardinality agree on finite sets.

\smallskip
If $\xi,\zeta\in\hR$ are hyperreal numbers, we write
$\xi\sim\zeta$ when $\xi$ and $\zeta$ are \emph{infinitely close},
\emph{i.e.} when their distance $|\xi-\zeta|$ is infinitesimal.
Remark that if $\xi\in\hR$ is finite (\emph{i.e.}, not infinite),
then there exists a unique real number $r\sim\xi$,
namely $r=\inf\{x\in\R\mid x>\xi\}$.\footnote
{~Such a real number $r$ is usually called the \emph{standard part} of $\xi$.}

\smallskip
We are finally ready to formulate the definitions of density in nonstandard terms.

\medskip
\noindent
\textbf{Definition.}\ [Nonstandard]\
For $A\subseteq\N$, its \emph{upper asymptotic density} $\overline{d}(A)=\beta$
is the greatest real number $\beta$ such that there exists
an infinite $\nu\in\hN$ with
$$\|\hA\cap [1,\nu]\|/\nu\ \sim\ \beta\,.$$

\smallskip
\noindent
\textbf{Definition.}\ [Nonstandard]\
For $A\subseteq\Z$, its \emph{upper Banach density}
$\text{BD}(A)=\beta$ is the greatest real number $\beta$
such that there exists an infinite interval $I$ with
$$\|\hA\cap I\|/\|I\|\ \sim\ \beta\,.$$

\medskip
Another notion of density that is
widely used in number theory is the following.

\medskip
\begin{itemize}
\item
The \emph{Schnirelmann density} $\sigma(A)$ of a set
$A\subseteq\N$ is defined by
$$\sigma(A)\ =\ \inf_{n\in\N}\frac{|A\cap[1,n]|}{n}.$$
\end{itemize}

\medskip
Clearly $\text{BD}(A)\ge\overline{d}(A)\ge\sigma(A)$, and it is easy
to find examples where inequalities are strict. Remark that
$\sigma(A)=1\Leftrightarrow A=\N$, and that
$\text{BD}(A)=1\Leftrightarrow A$ is thick.
Moreover, if $A$ is piecewise syndetic then
$\text{BD}(A)>0$, but not conversely.

\smallskip
Let us now recall a natural notion of embeddability
for the combinatorial structure of sets:\footnote
{~This notion is implicit in I.Z. Ruzsa's paper \cite{ru},
and has been explicitly considered in \cite{dn1} \S 4.
As natural as it is, it is well possible that finite
embeddability has been also considered by other authors,
but I am not aware of it.}

\medskip
\begin{itemize}
\item
We say that $X$ is \emph{finitely embeddable} in $Y$,
and write $X\fe Y$, if every finite $F\subseteq X$
has a shifted copy $t+F\subseteq Y$.
\end{itemize}

\medskip
It is readily seen that transitivity holds:
$X\fe Y$ and $Y\fe Z$ imply $X\fe Z$.
Notice that a set is $\fe$-maximal if and only if it is thick.
Finite embeddability preserves fundamental
combinatorial notions:

\medskip
\begin{itemize}
\item
If $X\fe Y$ and $X$ is PS,
then also $Y$ is PS.

\smallskip
\item
If $X\fe Y$ and $X$ contains an arithmetic progression of length $k$, then
also $Y$ contains an arithmetic progression of length $k$.

\smallskip
\item
If $X\fe Y$ then $\text{BD}(X)\le\text{BD}(Y)$.
\end{itemize}

\medskip
Remark that while piecewise syndeticity is preserved
under $\fe$, the property of being syndetic is \emph{not}. Similarly,
the upper Banach density is preserved or increased under $\fe$,
but upper asymptotic density is \emph{not}.

\smallskip
Other properties that suggest finite embeddability as a useful
notion are the following:

\medskip
\begin{itemize}
\item
If $X\fe Y$ then $X-X\subseteq Y-Y$\,;

\smallskip
\item
If $X\fe Y$ and $X'\fe Y'$ then $X-X'\fe Y-Y'$.
\end{itemize}

\medskip
In the nonstandard setting, $X\fe Y$ means that
a shifted copy of the whole $X$ is found in
the hyper-extension $\hY$.

\medskip
\noindent
\textbf{Definition.}\ [Nonstandard]\
$X\fe Y$ if $\nu+X\subseteq \hY$ for a suitable $\nu\in\hN$.

\medskip
Remark that the key point here is that
the shift $\nu$ could be an infinite number.

\smallskip
The sample result that we present below, due to R. Jin \cite{jin0},
allows to extend results that hold for sets with positive
Schnirelmann density to sets with positive upper Banach density.

\medskip
\begin{theorem}
Let $\text{BD}(A)=\beta>0$. Then there exists a set
$E\subseteq\N$ with $\sigma(E)\ge\beta$ and such that $E\fe A$.
\end{theorem}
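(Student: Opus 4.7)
The plan is to locate a hyperfinite point $\mu$ inside an infinite interval along which $\hA$ has density at least $\beta$, and then take $E:=\{n\in\N:\nu+\mu+n\in\hA\}$ for an appropriate base point $\nu$. First, using the nonstandard characterization of $\text{BD}$ together with the transfer of ``$\max_x|A\cap[x+1,x+n]|/n\geq\beta$'' for each $n$, I would pick an infinite $M\in\hN$ and a $\nu\in\hZ$ with $\|\hA\cap[\nu+1,\nu+M]\|\geq\beta M$. Let $s(\xi):=\|\hA\cap[\nu+1,\nu+\xi]\|$ for $\xi\in[0,M]$; this is an internal, hyperinteger-valued function with $s(0)=0$ and $s(M)\geq\beta M$.

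The naive choice --- take $\mu$ to minimize $s(\xi)-\beta\xi$ --- would give $s(\mu+\eta)-s(\mu)\geq\beta\eta$ for all $\eta\geq 0$ with $\mu+\eta\leq M$, but leaves open that $\mu$ sits infinitesimally close to $M$, so that no standard shift fits inside the interval. To push the minimum away from $M$, I would pick an infinitesimal $\epsilon>0$ for which $\epsilon M$ is still infinite (say $\epsilon:=1/\sqrt{M}$) and minimize instead the internal hyperreal-valued $g(\xi):=s(\xi)-(\beta-\epsilon)\xi$ on $[0,M]$; by transfer of the fact that a real-valued function on a finite set attains its minimum, $g$ attains a minimum at some $\mu$.

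The verification then breaks into two parts. \emph{$M-\mu$ is infinite:} from $g(\mu)\leq g(0)=0$ we get $s(\mu)\leq(\beta-\epsilon)\mu$, and combining with $s(M)\geq\beta M$ and the trivial $s(M)-s(\mu)\leq M-\mu$ yields $\epsilon\mu\leq(1-\beta)(M-\mu)$, hence $M-\mu\geq M\epsilon/(1-\beta+\epsilon)$, which is of order $\sqrt{M}$ and thus infinite. \emph{The density bound:} for each standard $n\in\N$, the infiniteness of $M-\mu$ guarantees $\mu+n\leq M$, and minimality of $\mu$ gives $s(\mu+n)-s(\mu)\geq(\beta-\epsilon)n=\beta n-n\epsilon$; since the left side is a standard natural and $n\epsilon$ is infinitesimal, this upgrades to $s(\mu+n)-s(\mu)\geq\beta n$. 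Hence $|E\cap[1,n]|=s(\mu+n)-s(\mu)\geq\beta n$ for every $n$, so $\sigma(E)\geq\beta$, and $(\nu+\mu)+E\subseteq\hA$ witnesses $E\fe A$. The main obstacle is the argument that $M-\mu$ is infinite: without the perturbation $\beta\to\beta-\epsilon$ one cannot exclude the degenerate case where the naive minimizer sits infinitesimally close to the endpoint $M$, and the choice of $\epsilon$ has to be simultaneously small enough that $n\epsilon$ is negligible against any standard $n$ and large enough that $\epsilon M$ is infinite.
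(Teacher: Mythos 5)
Your proof is correct, and it takes a genuinely different route from the paper's. The paper proves a finitary Claim (for every standard $k$ there is a base point $\xi\in[1,M]$ with $\|\hA\cap[\xi,\xi+i)\|/i\ge\beta-1/k$ for all $i\le k$) by a greedy partition-and-count contradiction argument, and then invokes \emph{overflow} to obtain a single $\xi$ working up to an infinite length $\nu$, from which $\sigma(E)\ge\beta$ follows by taking standard parts. You instead run a single hyperfinite optimization: minimize the internal function $g(\xi)=s(\xi)-(\beta-\epsilon)\xi$ on the hyperfinite interval $[0,M]$, where the perturbation $\epsilon=1/\sqrt{M}$ is chosen precisely so that (a) $\epsilon M$ is infinite, forcing the minimizer away from the right endpoint, and (b) $n\epsilon$ is infinitesimal for each standard $n$, so that the bound $s(\mu+n)-s(\mu)\ge(\beta-\epsilon)n$ upgrades to $\ge\beta n$ once one notes the left side is a standard integer. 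Your argument buys a cleaner, one-shot construction that sidesteps both the counting Claim and the explicit overflow step; the paper's version is perhaps more elementary in that it only needs the overflow principle as stated, but it requires a separate combinatorial lemma. Two small points worth cleaning up: your shift $\nu+\mu$ lies in $\hZ$ rather than $\hN$; a preliminary translation of $A$ (as in the paper, reducing to $I=[1,M]$) fixes this and costs nothing since $\fe$ is translation-invariant. Also, when you choose $\nu,M$, you should get $s(M)\ge\beta M$ honestly from transfer of $\max_x|A\cap[x+1,x+n]|\ge\beta n$ for all $n\in\N$ (which holds since $\mathrm{BD}(A)$ is an infimum equal to $\beta$), rather than from the $\sim\beta$ nonstandard characterization alone — you do say this, and it is the right move.
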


\begin{proof}[Nonstandard proof]
By the nonstandard definition of Banach density,
there exists an infinite interval $I$ such that the
relative density $\|\hA\cap I\|/\|I\|\sim\beta$.
By translating if necessary, we can assume without loss of generality
that $I=[1,M]$ where $M\in\N_\infty$.
By a straight counting argument, we will prove the following:

\smallskip
\begin{itemize}
\item
\textbf{Claim.}\
\emph{For every $k\in\N$ there exists $\xi\in[1,M]$ such that
for all $i=1,\ldots,k$, the relative density
$\|\hA\cap[\xi,\xi+i)\|/i\ge\beta-1/k$.}
\end{itemize}

\smallskip
We then use an important principle of nonstandard analysis,
namely:

\smallskip
\begin{itemize}
\item
\emph{Overflow}:\
If $A\subseteq\hN$ is internal and contains all natural
numbers, then it also contains all hyper-natural
numbers up to an infinite $\nu$:
$$\quad\quad\quad\ \
A\ \text{internal}\ \&\ \N\subset A\ \Longrightarrow\
\exists\nu\in\N_\infty\ [1,\nu]\subseteq A.$$
\end{itemize}

\smallskip
By the Claim, the internal set below includes $\N$:
$$A\ =\ \{\nu\in\hN\mid\exists\xi\in[1,M]\ \forall i\le\nu\
\|\hA\cap[\xi,\xi+i)\|/i\ge\beta-1/\nu\}.$$
Then, by \emph{overflow},
there exists an infinite $\nu\in\hN$ and $\xi\in[1,M]$ such that
$\|\hA\cap[\xi,\xi+i)\|/i\ge\beta-1/\nu$ for all $i=1,\ldots,\nu$.
In particular, for all finite $n\in\N$, the real number
$\|\hA\cap[\xi,\xi+n)\|/n\ge\beta$ because it is not smaller than
$\beta-1/\nu$, which is infinitely close to $\beta$. If we denote by
$E=\{n\in\N\mid \xi+n\in\hA\}$, this means that $\sigma(E)\ge\beta$.
The thesis is reached because $\xi+E\subseteq\hA$,
and hence $E\fe A$, as desired.

\medskip
We are left to prove the Claim. Given $k$, assume by contradiction
that for every $\xi\in[1,M]$ there exists $i\le k$ such
that $\|\hA\cap[\xi,\xi+i)\|<i\cdot(\beta-1/k)$.
By ``hyper-induction" on $\hN$, define
$\xi_1=1$, and $\xi_{s+1}=\xi_s+n_s$ where
$n_s\le k$ is the least natural number such that
$\|\hA\cap[\xi_s,\xi_s+n_s)\|<n_s\cdot(\beta-1/k)$; and stop at step $N$
when $M-k\le\xi_N<M$. Since $k$ is finite, we have
$k/M\sim 0$ and ${\xi_N}/M\sim 1$. Then:
$$\beta\ \sim\ \frac{1}{M}\cdot\big\|\hA\cap[1,M]\big\|\ \sim\
\frac{1}{M}\cdot\big\|\hA\cap[\xi_1,\xi_N)\big\|\ =\
\frac{1}{M}\cdot\sum_{s=1}^{N-1}\big\|\hA\cap[\xi_s,\xi_{s+1})\big\|$$
$$<\ \frac{1}{M}\cdot\left(\sum_{s=1}^{N-1}n_s\cdot\left(\beta-\frac{1}{k}\right)\right)\ =\
\frac{\xi_N-1}{M}\cdot\left(\beta-\frac{1}{k}\right)\ \sim\ \beta-\frac{1}{k}\,,$$
a contradiction.
\end{proof}

\medskip
The previous theorem can be strengthened in several directions.
For instance, one can find $E$ to be ``densely" finitely
embedded in $A$, in the sense that
for every finite $F\subseteq X$ one has
``densely-many" shifted copies included in $Y$, \emph{i.e.}
$\text{BD}\left(\{t\in\Z\mid t+F\subseteq Y\}\right)>0$.\footnote
{~See \cite{dn1,dgjllm2} for more on this topic.}

\bigskip
\section{Partition regularity problems}

In this section we focus on the use
of hyper-natural numbers in partition regularity problems.

\smallskip
The notion of partition regularity for families of sets
given in Section 2, is sometimes weakened as follows:

\smallskip
\begin{itemize}
\item
A family $\F$ of sets is \emph{weakly partition regular}
on $X$ if for every finite partition $X=C_1\ldots\cup C_n$
there exists $F\in\F$ which is contained in one piece $F\subseteq C_i$.
\end{itemize}

\smallskip
Differently from the usual approach to nonstandard analysis,
here it turns out useful to work in a framework where
hyper-extensions can be iterated, so that one can
consider, \emph{e.g.}:

\medskip
\begin{itemize}
\item
The hyper-hyper-natural numbers $\hhN$\,;

\smallskip
\item
The hyper-extension ${}^*\xi\in\hhN$ of an
hyper-natural number $\xi\in\hN$\,;
\end{itemize}

\medskip
\noindent
and so forth. We remark that working with iterated hyper-extensions
requires caution, because of the existence of different
levels of extensions.\footnote
{~See \cite{dn2} for a discussion of the foundations
of iterated hyper-extensions.}
Here, it will be enough to
notice that, by \emph{transfer},
one has that $\hN\varsubsetneq\hhN$, and
if $\xi\in\hN\setminus\N$ then ${}^*\xi\in\hhN\setminus\hN$;
and similarly for $n$-th iterated hyper-extensions.\footnote
{~Notice also that $\hN$ is an initial segment of $\hhN$, \emph{i.e.}
$\xi<\nu$ for every $\xi\in\hN$ and for every
$\nu\in\hhN\setminus\hN$ (such a property is not used in this paper).}

\smallskip
Let us start with a nonstandard
proof of the classic Ramsey theorem for pairs.

\medskip
\begin{theorem}[Ramsey -- 1928]
Given a finite colouring $[\N]^2=C_1\cup\ldots\cup C_r$
of the pairs of natural numbers,
there exists an infinite set $H$ whose pairs are
monochromatic: $[H]^2\subseteq C_i$.\footnote
{~In other words, the family $\F=\{[H]^2\mid H\ \text{infinite}\}$
is weakly partition regular on $[\N]^2$.}
\end{theorem}

\begin{proof}[Nonstandard proof]
Take hyper-hyper-extensions and get the finite coloring
$$[{}^*\hN]^2\ =\ {}^{**}([\N]^2)\ =\ {}^{**}C_1\cup\ldots\cup{}^{**}C_r.$$
Pick an infinite $\xi\in{}^*\N$, let $i$ be such that
$\{\xi,{}^*\xi\}\in{}^{**}C_i$, and consider the set
$A=\{x\in\N\mid \{x,\xi\}\in{}^*C_i\}$. Then
$\xi\in\{x\in{}^*\N\mid\{x,{}^*\xi\}\in{}^{**}C_i\}=\hA$.
Now inductively define the sequence
$\{a_1<a_2<\ldots<a_n<\ldots\}$ as follows:

\smallskip
\begin{itemize}
\item
Pick any $a_1\in A$, and let $B_1=\{x\in\N\mid \{a_1,x\}\in C_i\}$.
Then $\{a_1,\xi\}\in{}^*C_i$ and $\xi\in\hB_1$.

\smallskip
\item
$\xi\in{}^*A\cap{}^* B_1\Rightarrow A\cap B_1$ is infinite.\footnote
{~Here we use the fact that the hyper-extension $\hX$ of a set $X\subseteq\N$
contains infinite numbers if and only if $X$ is infinite.}
Then pick $a_2\in A\cap B_1$ with $a_2>a_1$.

\smallskip
\item
$a_2\in B_1\Rightarrow \{a_1,a_2\}\in C_i$.

\smallskip
\item
$a_2\in A\Rightarrow \{a_2,\xi\}\in{}^*C_i\Rightarrow
\xi\in{}^*\{x\in\N\mid\{a_2,x\}\in{}^*C_1\}={}^*B_2$.

\smallskip
\item
$\xi\in{}^*A\cap{}^* B_1\cap{}^*B_2\Rightarrow$ we can
pick $a_3\in A\cap B_1\cap B_2$ with $a_3>a_2$.

\smallskip
\item
$a_3\in B_1\cap B_2\Rightarrow\{a_1,a_3\},\{a_2,a_3\}\in C_i$,
and so forth.
\end{itemize}

\medskip
\noindent
Then the infinite set $H=\{a_n\mid n\in\N\}$ is such that $[H]^2\subseteq C_i$.
\end{proof}

\medskip
We now give some hints on how iterated hyper-extensions
can be used in partition regularity of equations.
Recall that:

\medskip
\begin{itemize}
\item
An equation $E(X_1,\ldots,X_n)=0$ is [injectively]
\emph{partition regular} over $\N$ if the set of [distinct]
solutions is weakly partition regular on $\N$,
\emph{i.e.}, for every finite coloring
$\N=C_1\cup\ldots\cup C_r$ one finds [distinct] monochromatic
$a_1,\ldots,a_n\in C_i$ such that $E(a_1,\ldots,a_n)=0$.
\end{itemize}

\medskip
A useful nonstandard notion in this context is the following:

\medskip
\noindent
\textbf{Definition.}\
We say that two hyper-natural numbers $\xi,\zeta\in\hN$ are
\emph{indiscernible}, and write $\xi\simeq\zeta$,
if they cannot be distinguished by any hyper-extension, \emph{i.e.}
if for every $A\subseteq\N$ one has
either $\xi,\zeta\in\hA$ or $\xi,\zeta\notin\hA$.\footnote
{~The name ``indiscernible" is borrowed from mathematical
logic. Recall that in model theory two elements are
named \emph{indiscernible} if they cannot be distinguished by any
first-order formula.}

\medskip
%QUESTO PARAGRAFO E'STATO AGGIUNTO
Notice that indiscernibility coincides with equality on finite numbers,
because if $k\in\N$ is finite and $\xi\ne k$,
then trivially $k\in\{k\}={}^*\{k\}$ and $\xi\notin{}^*\{k\}$.
Notice also that if $k>1$ is any natural number,
then $k\,\xi\not\simeq \xi$. Indeed, if
$A$ is the set of those natural numbers $n$
with the property that the largest exponent $a$
such that $k^a$ divides $n$ is even, then
$\xi\in\hA\Leftrightarrow k\,\xi\notin\hA$.
A useful property that one can easily prove is the following:
``If $\xi\simeq\zeta$, then for every $f:\N\to\N$ one has
$\hf(\xi)\simeq\hf(\zeta)$."

\medskip
By using the notion of indiscernibility, one can
reformulate in nonstandard terms:

\medskip
\noindent
\textbf{Definition.}\ [Nonstandard]\
An equation $E(X_1,\ldots,X_n)=0$
is [injectively] \emph{partition regular} on $\N$ if
there exist [distinct] hyper-natural numbers
$\xi_1\simeq\ldots\simeq\xi_n$ such that
$E(\xi_1,\ldots,\xi_n)=0$.

\medskip
The following result recently appeared in \cite{cgs}.

\medskip
\begin{theorem}
The equation $X+Y=Z^2$ is \emph{not} partition regular on $\N$,
except for the trivial solution $X=Y=Z=2$.
\end{theorem}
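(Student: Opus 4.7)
The plan is to exploit the nonstandard reformulation just given and show that no three \emph{distinct} pairwise indiscernible hyper-naturals $\xi\simeq\eta\simeq\zeta$ satisfy $\xi+\eta=\zeta^2$. Distinct indiscernibles are necessarily infinite, because the singleton $\{k\}\subseteq\N$ separates any standard natural $k$ from any other hyper-natural; hence I may assume $\xi,\eta,\zeta\in\hN\setminus\N$, and by symmetry of $X$ and $Y$ together with distinctness, that $\xi<\eta$.

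The decisive ingredient is a sharp double-logarithmic identity. From $\xi<\eta$ one has $\eta<\zeta^2\le 2\eta$, hence $\log_2\eta=2\log_2\zeta-\epsilon$ with $\epsilon\in(0,1]$. Taking one more logarithm,
$$\log_2\log_2\eta\;=\;1+\log_2\log_2\zeta+\mu,$$
where $\mu=\log_2(1-\epsilon/(2\log_2\zeta))$ is a negative infinitesimal, since $\zeta$ is infinite. So $\eta$ and $\zeta$ sit at double-logarithmic scales that differ by almost exactly $1$, which is exactly the kind of shift a parity coloring of $\lfloor\log_2\log_2 n\rfloor$ should detect.

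Accordingly I introduce the finite coloring $c=(c_1,c_2):\N\to\{0,1\}^2$ defined, for $n\ge 4$, by $c_1(n)=\lfloor\log_2\log_2 n\rfloor\bmod 2$ and $c_2(n)=\lfloor\log_2\log_2 n+\tfrac12\rfloor\bmod 2$ (extended arbitrarily on the finite exceptional set). Since $c_1$ and $c_2$ have finite range, indiscernibility of $\eta$ and $\zeta$ forces ${}^*c_i(\eta)={}^*c_i(\zeta)$ for both $i$. A short computation from the identity above, using that $\mu<0$ is infinitesimal, shows that ${}^*c_1(\eta)={}^*c_1(\zeta)$ can hold only if the fractional part $\{\log_2\log_2\zeta\}$ is itself infinitesimal, whereas ${}^*c_2(\eta)={}^*c_2(\zeta)$ forces that same fractional part to be infinitesimally close to $\tfrac12$; the two conditions are incompatible, giving the desired contradiction.

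The main obstacle is precisely the boundary case addressed by the companion coloring $c_2$: a single parity coloring of $\lfloor\log_2\log_2 n\rfloor$ fails to separate $\eta$ from $\zeta$ when $\log_2\log_2\zeta$ is infinitesimally close to a natural number, because the tiny negative shift $\mu$ then drops the floor by one and restores the parity. The $\tfrac12$-shifted coloring $c_2$ is designed exactly to catch this remaining case.
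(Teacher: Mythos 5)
Your proof is correct, and it takes a genuinely different route from the paper's. The paper derives the contradiction from indiscernibility by arithmetic modulo $5$: it shows $\alpha\equiv\beta\equiv\gamma\equiv i\pmod 5$ with $i\in\{0,2\}$, writes each of $\alpha-i,\beta-i,\gamma-i$ as a power of $5$ times a unit, and checks that the leading residues on the two sides of $\alpha+\beta-2i=\gamma^2-i^2$ can never match. You instead exploit the size mismatch between the degree-one side $X+Y$ and the degree-two side $Z^2$: from $\xi+\eta=\zeta^2$ with $\xi\le\eta$ you get $\eta<\zeta^2\le 2\eta$, so $\log_2\log_2\eta$ equals $\log_2\log_2\zeta$ plus $1$ plus a negative infinitesimal, and the pair of parity colorings of $\lfloor\log_2\log_2 n\rfloor$ and $\lfloor\log_2\log_2 n+\tfrac12\rfloor$ jointly contradict $\eta\simeq\zeta$. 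Your ``scale'' argument is arguably more portable -- it adapts essentially verbatim to $X+Y=Z^k$ for any $k\ge 2$ and to other equations with a genuine growth gap -- whereas the paper's $5$-adic computation is compact but finely tuned to this particular equation. A small slip: having reduced to $\xi<\eta$ strictly you should write $\epsilon\in(0,1)$, not $(0,1]$; but allowing $\epsilon=1$ (i.e. $\xi=\eta$) is harmless and in fact shows your argument does not really need distinctness, except to exclude the constant solution $X=Y=Z=2$ -- a trivial solution that the paper's proof also tacitly excludes (its factorization $\alpha=5^a\alpha_1+i$ with $a>0$ presupposes $\alpha>i$).
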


\begin{proof}[Nonstandard proof]
%CORRETTA
Assume by contradiction that there exist $\alpha\simeq\beta\simeq\gamma$
in $\hN$ such that $\alpha+\beta=\gamma^2$.
Notice that $\alpha,\beta,\gamma$ are infinite, as otherwise
$\alpha=\beta=\gamma=2$ would be the trivial solution.
By the hypothesis of indiscernibility,
$\alpha,\beta,\gamma$ belong to the same
congruence class modulo $5$, say
$\alpha\equiv\beta\equiv\gamma\equiv i\mod 5$ with $0\le i\le 4$.
Now write the numbers in the forms:
$$\alpha=5^a\cdot\alpha_1+i\,;\quad
\beta=5^b\cdot\beta_1+i\,;\quad
\gamma=5^c\cdot\gamma_1+i$$
where $a,b,c>0$ and $\alpha_1,\beta_1,\gamma_1$ are not divisible by $5$.
Pick a function $f:\N\to\N$ such that, for $n\ge 5$, the value
$f(n)$ is the unique $k\not\equiv 0\mod 5$
such that $n=5^hk+i$ for suitable $h>0$ and $0\le i\le 4$.
Observe that $\alpha_1,\beta_1,\gamma_1$ are the images under $\hf$
of $\alpha,\beta,\gamma$ respectively;
so, $\alpha\simeq\beta\simeq\gamma$ implies that
$\alpha_1\simeq\beta_1\simeq\gamma_1$, and therefore
$\alpha_1\equiv\beta_1\equiv\gamma_1\equiv j\not\equiv 0\mod 5$.

\smallskip
The equality $\alpha+\beta=\gamma^2$ implies that
either $i=0$ or $i=2$.
Assume first that $i=0$. In this case
$\gamma^2=5^{2c}\gamma_1^2$ where $\gamma_1^2\equiv j^2\not\equiv 0\mod 5$.
If $a<b$ then $\alpha+\beta=5^a(\alpha_1+5^{b-a}\beta_1)$ where
$\alpha_1+5^{b-a}\beta_1\equiv j\not\equiv 0\mod 5$.
It follows that $2c=a\simeq c$, a contradiction.
If $a>b$ the proof is similar. If $a=b$ then
$\alpha+\beta=5^a(\alpha_1+\beta_1)$
where $\alpha_1+\beta_1\equiv 2j\not\equiv 0\mod 5$,
and also in this case we would have $2c=a\simeq c$, a contradiction.
If $i=2$ then $\gamma^2-4=5^c(5^c\gamma_1^2+4\gamma_1)$
where $5^c\gamma_1^2+4\gamma_1\equiv 4j\not\equiv 0\mod 5$.
Now, in case $a<b$, one has that
$\alpha+\beta-4=5^a(\alpha_1+5^{b-a}\beta_1)$
where $\alpha_1+5^{b-a}\beta_1\equiv j\not\equiv 0\mod 5$,
and so it would follow that
$5^c\gamma_1^2+4\gamma_1=\alpha_1+5^{b-a}\beta_1$.
But then we would have $4j\equiv j$, which is not
possible because $j\not\equiv 0$.
The case $a>b$ is similar. Finally, if $a=b$ then
$\alpha+\beta-4=5^a(\alpha_1+\beta_1)$ where
$\alpha_1+\beta_1\equiv 2j\not\equiv 0\mod 5$,
and it would follow that $4j\equiv 2j$, again
reaching the contradiction $j\equiv 0$.
\end{proof}

\medskip
The notion of indiscernibility naturally extends to the
iterated hyper-extensions of the natural numbers.
\emph{E.g.}, if  $\Omega,\Xi\in\hhN$ then
$\Omega\simeq\Xi$ means that for every $A\subseteq\N$ one has
either $\Omega,\Xi\in{}^{**}\!A$ or $\Omega,\Xi\notin{}^{**}\!A$.
Notice that $\alpha\simeq{}^*\alpha$ for every $\alpha\in\hN$.

\smallskip
In the sequel, a fundamental role will be played by
the following special numbers.

\medskip
\noindent
\textbf{Definition.}\
A hyper-natural number $\xi\in\hN$ is \emph{idempotent}
if $\xi\simeq\xi+{}^*\xi$.\footnote
{~The name ``idempotent" is justified by its
characterization in terms of ultrafilters:
``\emph{$\xi\in\hN$ is idempotent if and only if the corresponding
ultrafilter $\UU_\xi=\{A\subseteq\N\mid \xi\in\hA\}$
is idempotent with respect to the ``pseudo-sum" operation:
$$A\in\U\oplus\V\ \Longleftrightarrow\ \{n\mid A-n\in\V\}\in\U$$
where $A-n=\{m\mid m+n\in A\}$".}
The algebraic structure $(\beta\N,\oplus)$ on the space
of ultrafilters $\beta\N$ and its related generalizations
have been deeply investigated during the
last forty years, revealing a powerful tool for applications
in Ramsey theory and combinatorial number theory
(see the comprehensive monography \cite{hs}).
In this area of research,
idempotent ultrafilters are instrumental.}

\medskip
Recall van der Waerden Theorem:
\emph{``Arbitrarily large monochromatic arithmetic progressions
are found in every finite coloring of $\N$"}.
Here we prove a weakened version about
3-term arithmetic progressions, by showing the
partition regularity of a suitable equation. 

\medskip
\begin{theorem}
The diophantine equation $X_1-2X_2+X_3=0$ is injectively partition regular on $\N$,
which means that for every finite coloring of $\N$
there exists a non-constant monochromatic 3-term arithmetic progression.
\end{theorem}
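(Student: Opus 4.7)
By the nonstandard characterization just given, it suffices to exhibit three pairwise distinct, pairwise indiscernible hyper-natural numbers $\xi_1\simeq\xi_2\simeq\xi_3$ satisfying $\xi_1-2\xi_2+\xi_3=0$. The plan is to build such a triple from a suitably chosen idempotent $\xi\in\hN$ using iterated hyper-extensions.

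The natural candidate is
$$\xi_1\ =\ \xi,\qquad \xi_2\ =\ \xi+{}^*\xi,\qquad \xi_3\ =\ \xi+2\cdot{}^*\xi,$$
with $\xi_2,\xi_3\in\hhN$. The AP identity $\xi_1+\xi_3=2\xi_2$ holds by direct computation, and pairwise distinctness follows from the fact (noted earlier in the paper) that $\hN$ is an initial segment of $\hhN$, so ${}^*\xi$ strictly exceeds every element of $\hN$ and hence $\xi_1<\xi_2<\xi_3$.

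It then remains to verify the three indiscernibility relations. The relation $\xi\simeq\xi+{}^*\xi$ is by definition the idempotency of $\xi$, so I would invoke the existence of an idempotent hyper-natural number, which via the ultrafilter correspondence $\UU_\xi=\{A\subseteq\N:\xi\in\hA\}$ reduces to the existence of idempotents in the compact left-topological semigroup $(\beta\N,\oplus)$. The relation $\xi+{}^*\xi\simeq\xi+2{}^*\xi$, however, does \emph{not} follow from plain idempotency: translated through the same dictionary it becomes the identity $\UU_\xi\oplus\UU_\xi=\UU_\xi\oplus(2\cdot\UU_\xi)$, which unfolds into a genuine 3-AP counting condition on members of $\UU_\xi$. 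To secure it I would therefore specialize $\xi$ to be a \emph{minimal} idempotent, whose members are central sets and consequently contain the required abundance of 3-APs by the Central Sets theorem (equivalently, by an argument inside the minimal two-sided ideal of $(\beta\N,\oplus)$).

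The main obstacle is precisely this second indiscernibility $\xi+{}^*\xi\simeq\xi+2{}^*\xi$: plain idempotency delivers only Schur-type configurations $\{x,y,x+y\}$, and upgrading to arithmetic progressions of length three requires the nontrivial combinatorial-algebraic structure of minimal idempotents. Once this is in hand, the triple $(\xi_1,\xi_2,\xi_3)$ realizes the nonstandard criterion for injective partition regularity of $X_1-2X_2+X_3=0$, completing the proof.
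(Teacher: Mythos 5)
Your general framework is right — look for pairwise indiscernible, pairwise distinct iterated-extension combinations of an idempotent $\xi$ solving the equation — but the triple you chose cannot be made to work, and the fix you propose does not repair it. You correctly flag that the crux is $\xi+{}^*\xi\simeq\xi+2{}^*\xi$ (equivalently, since $\xi\simeq\xi+{}^*\xi$, the relation $\xi\simeq\xi+2{}^*\xi$). But this relation fails for \emph{every} idempotent $\xi$, minimal or not. The paper's stated characterization (from \cite{dn2}) says that for idempotent $\xi$, $\,a_0\xi+a_1{}^*\xi+\dots\simeq b_0\xi+b_1{}^*\xi+\dots$ \emph{if and only if} the coefficient strings are $\approx$-equivalent, and the $\approx$-equivalence preserves the reduced string obtained by deleting zeros and collapsing consecutive equal entries. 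Your three strings $\langle 1\rangle$, $\langle 1,1\rangle$, $\langle 1,2\rangle$ reduce to $\langle 1\rangle$, $\langle 1\rangle$, $\langle 1,2\rangle$: the third is genuinely inequivalent. Specializing to a minimal idempotent does not change this; the characterization is an iff for all idempotents, so no choice of $\xi$ rescues $\xi\simeq\xi+2{}^*\xi$. (Put in ultrafilter terms: you would need $\U=\U\oplus 2\U$, which is not a consequence of minimality; the Central Sets theorem tells you that members of $\U$ contain $3$-APs, which is a weaker, single-set statement, not the ultrafilter identity you need.)

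The paper gets around exactly this obstruction by working one level higher in ${}^{***}\N$ and \emph{bracketing the varying coefficient between two fixed nonzero ones}. The triple is $\nu=2\xi+0+{}^{**}\xi$, $\mu=2\xi+{}^*\xi+{}^{**}\xi$, $\lambda=2\xi+2{}^*\xi+{}^{**}\xi$, with coefficient strings $\langle 2,0,1\rangle$, $\langle 2,1,1\rangle$, $\langle 2,2,1\rangle$, which all reduce to $\langle 2,1\rangle$: the middle entry $0$ can be deleted, and the middle entry $1$ or $2$ collapses into the adjacent equal entry. So indiscernibility follows from plain idempotency alone. Distinctness is clear, and the equation still holds since the outer coefficients cancel, $(2-2\cdot 2+2)\xi+(0-2+2){}^*\xi+(1-2+1){}^{**}\xi=0$. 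If you want to keep your two-level triple you would have to abandon the indiscernibility criterion entirely and argue directly (e.g., via van der Waerden and partition regularity of piecewise syndetic sets), which is a different proof; within the idempotent-string method, the repair is to add the flanking coefficients as the paper does.
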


\begin{proof}[Nonstandard proof]
Pick an idempotent number $\xi\in\hN$.
The following three distinct numbers in ${}^{***}\N$
are a solution of the given equation:
$$\nu=2\xi+\,0\, +{}^{**}\xi\,;\
\mu=2\xi+{}^*\xi+{}^{**}\xi\,;\
\lambda=2\xi+2{}^*\xi+{}^{**}\xi.$$
That $\nu\simeq\mu\simeq\lambda$ are indiscernible
is proved by a direct computation. Precisely,
notice that 
${}^*\xi\simeq\xi+{}^*\xi$ by the idempotency hypothesis,
and so, for every $A\subseteq\N$ and
for every $n\in\N$, we have that
$${}^*\xi\in{}^{**}\!A-n={}^{**}(A-n)\ \Leftrightarrow\ \xi+{}^*\xi\in{}^{**}(A-n).$$
In consequence, the properties listed below are equivalent to each other:

\smallskip
\begin{itemize}
\item
$2\xi+{}^*\xi+{}^{**}\xi\in{}^{***}\!A$
\item
$2\xi\in({}^{***}\!A-{}^{**}\xi-{}^*\xi)\cap\hN={}^*[({}^{**}\!A-{}^*\xi-\xi)\cap\N]$
\item
$2\xi\in{}^*\{n\in\N\mid \xi+{}^*\xi\in{}^{**}(A-n)\}$
\item
$2\xi\in{}^*\{n\in\N\mid {}^*\xi\in{}^{**}(A-n)\}$
\item
$2\xi\in{}^*[({}^{**}\!A-{}^*\xi)\cap\N]=({}^{***}\!A-{}^{**}\xi)\cap\hN$
\item
$2\xi+{}^{**}\xi\in{}^{***}\!A.$
\end{itemize}

\smallskip
This shows that $\nu\simeq\mu$. The other relation
$\mu\simeq\lambda$ is proved in the same fashion.\footnote
{~Here we actually proved the following result
(\cite{bh} Th. 2.10):
\emph{``Let $\U$ be any idempotent ultrafilter.
Then every set $A\in 2\,\U\oplus\U$ contains
a 3-term arithmetic progression"}.}
\end{proof}

\medskip
One can elaborate on the previous nonstandard proof
and generalize the technique. Notice that the considered
elements $\mu,\nu,\lambda$
were linear combinations of iterated hyper-extensions of
a fixed idempotent number $\xi$, and so they can be described
by the corresponding finite strings of coefficients
in the following way:

\medskip
\begin{itemize}
\item
$\nu=2\xi+\,0\, +{}^{**}\xi\ \rightsquigarrow\ \langle 2,0,1\rangle$
\item
$\mu=2\xi+{}^*\xi+{}^{**}\xi\ \rightsquigarrow\ \langle 2,1,1\rangle$
\item
$\lambda=2\xi+2{}^*\xi+{}^{**}\xi\ \rightsquigarrow\ \langle 2,2,1\rangle$
\end{itemize}

\medskip
Indiscernibility of such linear combinations
is characterized by means of a suitable
equivalence relation $\approx$ on the finite strings, so that, \emph{e.g.},
$\langle 2,0,1\rangle\approx\langle 2,1,1\rangle\approx\langle 2,2,1\rangle$.

\medskip
\noindent
\textbf{Definition.}\
The equivalence $\approx$ between (finite)
strings of integers is the smallest equivalence relation such that:

\smallskip
\begin{itemize}
\item
The empty string $\approx\langle 0\rangle$.
\item
$\langle a\rangle\approx\langle a,a\rangle$ for all $a\in\Z$.
\item
$\approx$ is coherent with \emph{concatenations}, \emph{i.e.}
$$\sigma\approx\sigma'\ \text{and }\tau\approx\tau'\ \Longrightarrow\
\sigma^{\frown}\tau\,\approx\,\sigma'^\frown\tau'.$$
\end{itemize}

\medskip
So, $\approx$ is preserved by inserting or removing zeros,
by repeating finitely many times a term or, conversely, by shortening a
block of consecutive equal terms.
The following characterization is proved in \cite{dn2}:

\medskip
\begin{itemize}
\item
Let $\xi\in\hN$ be idempotent.
Then the following are equivalent:

\smallskip
\begin{enumerate}
\item
$a_0\xi+a_1{}^*\xi+\ldots+a_k\cdot{}^{k*}\xi\,\simeq\,
b_0\xi+b_1{}^*\xi+\ldots+b_h\cdot{}^{h*}\xi$

\smallskip
\item
$\langle a_0,a_1,\ldots,a_k\rangle\,\approx\,
\langle b_0,b_1,\ldots,b_h\rangle$.
\end{enumerate}
\end{itemize}

\medskip
Recall Rado theorem: \emph{``The diophantine equation
$c_1 X_1+\ldots+c_n X_n=0$ ($c_i\ne 0)$
is partition regular if and only if $\sum_{i\in F}c_i=0$
for some nonempty $F\subseteq\{1,\ldots,n\}$"}.
By using the above equivalence, one obtains a nonstandard
proof of a modified version of Rado theorem, with
a stronger hypothesis and a stronger thesis.

\medskip
\begin{theorem}
Let $c_1 X_1+\ldots+c_n X_n=0$ be a diophantine
equation with $n\ge 3$. If $c_1+\ldots+c_n=0$
then the equation is injectively partition regular on $\N$.
\end{theorem}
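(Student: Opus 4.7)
The plan is to mirror the idea of the preceding $3$-term AP theorem: fix an idempotent $\xi\in\hN$ and realize each $\xi_i$ as an explicit integer linear combination of the iterated hyper-extensions $\xi,{}^*\xi,{}^{**}\xi,\dots$, then verify pairwise indiscernibility via the string-equivalence characterization recalled just above the theorem. Concretely, it will suffice to exhibit, for some common length $L$, distinct integer strings $\sigma_1,\dots,\sigma_n\in\Z^L$ which are pairwise $\approx$-equivalent, whose last nonzero entry is positive (so that $\xi_i:=\sum_j \sigma_i(j)\cdot{}^{(j-1)*}\xi$ will be positive hyper-naturals), and for which $\sum_{i=1}^n c_i\,\sigma_i(j)=0$ at every coordinate $j$. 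The last property immediately yields $\sum c_i\xi_i=0$, while the $\approx$-equivalence gives $\xi_1\simeq\dots\simeq\xi_n$.

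I would build such strings with common reduced form $\langle\alpha,\gamma\rangle$ for cleverly chosen nonzero integers $\alpha\ne\gamma$ with $\gamma>0$. Each $\sigma_i$ would start with $\alpha$, end with $\gamma$, and have interior entries taken from $\{0,\alpha,\gamma\}$, arranged so that in every row all $\alpha$'s precede all $\gamma$'s: every such row then has reduced form $\langle\alpha,\gamma\rangle$ as desired. The two framing columns automatically have weighted sum $\alpha\sum_i c_i=\gamma\sum_i c_i=0$ by the hypothesis. An interior column $j$, writing $L_j,R_j\subseteq\{1,\dots,n\}$ for the indices of rows whose $j$-th entry equals $\alpha$ respectively $\gamma$, has weighted sum $\alpha\sum_{L_j}c_i+\gamma\sum_{R_j}c_i$, which must be made zero. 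Tuning the ratio $\gamma/\alpha$ to equal some $-c_i/\sum_{k\in T}c_k$ arising from the data makes ``small-support'' column types $(L,R)=(\{i\},T)$ valid; combining several such columns with the all-$\alpha$ and all-$\gamma$ framing columns will produce, for $n\ge 3$, enough pattern diversity to make the $n$ rows pairwise distinct.

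The main technical obstacle is the combinatorial design of the interior columns: one must simultaneously make every column balanced (under a single fixed ratio $\gamma/\alpha$) while having the resulting $n$ rows come out pairwise distinct and respecting the ``$\alpha$'s before $\gamma$'s'' ordering in each row. For highly symmetric coefficient tuples $c$ (for instance $c=(1,1,-2)$, whose only $c$-balanced subsets are $\emptyset$ and $\{1,2,3\}$), the symmetric ratio $\gamma/\alpha=-1$ affords very few balanced column types and one must pick an asymmetric ratio instead; for generic $c$ with many distinct coefficients a single asymmetric ratio may still be insufficient, and one then enlarges the reduced form to $\langle\alpha_1,\dots,\alpha_K\rangle$ with several levels, i.e.\ uses more iterated hyper-extensions of $\xi$, to obtain the additional flexibility. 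That such a construction is always possible under the hypotheses $\sum c_i=0$ and $n\ge 3$ is the combinatorial heart of the argument.
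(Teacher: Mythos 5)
Your reduction of the theorem to a combinatorial design problem is exactly right and is the same one the paper uses: fix an idempotent $\xi$, look for $n$ pairwise $\approx$-equivalent integer strings $\sigma_1,\ldots,\sigma_n$ of a common length $L$, distinct, and with every column balanced, $\sum_i c_i\sigma_i(j)=0$ for all $j$. But you stop short of actually producing the strings. You try to force all $\sigma_i$ to reduce to a length-$2$ form $\langle\alpha,\gamma\rangle$, and you correctly observe that this is too rigid: a single ratio $\gamma/\alpha$ rarely balances all the column types you need, and for coefficient vectors like $(1,1,-2)$ or ones with many distinct entries you are pushed to lengthen the reduced form in some unspecified way. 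That lengthening \emph{is} the theorem; leaving it as ``the combinatorial heart'' is the gap.

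The paper resolves it by going to the opposite extreme on the reduced-form length: take the common reduced form to be $\langle a_1,\ldots,a_{n-1}\rangle$ with $n-1$ \emph{free} parameters, and let each $\sigma_i$ be obtained from it by a single expansion step, staggered down a ``staircase''. Concretely (with $\xi_j={}^{j*}\xi$), put
$$\mu_1=\langle a_1,a_2,\ldots,a_{n-2},a_{n-1},a_{n-1}\rangle,\quad
\mu_n=\langle a_1,a_1,a_2,\ldots,a_{n-2},a_{n-1}\rangle,$$
and for $2\le i\le n-1$ let $\mu_i$ be $\langle a_1,\ldots,a_{n-1}\rangle$ with a single $0$ inserted at position $n+1-i$. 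Every row reduces to $\langle a_1,\ldots,a_{n-1}\rangle$, so $\mu_1\simeq\cdots\simeq\mu_n$, and they are distinct for generic $a_j$. The first and last columns give $(c_1+\cdots+c_n)a_1=0$ and $(c_1+\cdots+c_n)a_{n-1}=0$, which hold automatically by the hypothesis. Each interior column $j$ ($2\le j\le n-1$) mixes exactly two parameters and yields a single linear relation of the form
$(c_1+\cdots+c_{n-j})\,a_j + (c_{n-j+2}+\cdots+c_n)\,a_{j-1}=0$. That is $n-2$ homogeneous linear equations in $n-1$ unknowns, each coupling only consecutive $a_{j-1},a_j$, so the system has a one-parameter family of solutions and one can choose the $a_j$ in $\N$ with the $\mu_i$ distinct. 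In short: rather than fixing a short reduced form and designing clever columns around one ratio, the paper fixes a tall reduced form with enough free parameters that the column constraints become a chain of pairwise relations that is always solvable when $\sum c_i=0$. If you want to salvage your approach, this is the construction you need to converge to.
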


\begin{proof}[Nonstandard proof]
Fix $\xi\in\hN$ an idempotent element, and for simplicity
denote by $\xi_i={}^{i*}\xi$ the $i$-th iterated hyper-extension of $\xi$.
For arbitrary $a_1,\ldots,a_{n-1}$, consider
the following numbers in ${}^{n*}\N$:
$$\!\!\!\!\!
\tiny{
\begin{array}{lllllllllllllll}
\mu_1 &=& a_1\xi &+& a_2\xi_1 &+& a_3\xi_2 &+& \ldots &+& a_{n-2}\xi_{n-3} &+& a_{n-1}\xi_{n-2} &+& a_{n-1}\xi_{n-1} \\
\mu_2 &=& a_1\xi &+& a_2\xi_1 &+& a_3\xi_2 &+& \ldots &+& a_{n-2}\xi_{n-3} &+& 0 &+& a_{n-1}\xi_{n-1} \\
\mu_3 &=& a_1\xi &+& a_2\xi_1 &+& a_3\xi_2 &+& \ldots &+& 0 &+& a_{n-2}\xi_{n-2} &+& a_{n-1}\xi_{n-1} \\
\vdots &{}& \vdots &{}& \vdots &{}& \vdots &{}& \vdots &{}& \vdots &{}& \vdots &{}& \vdots \\
\mu_{n-2} &=& a_1\xi &+& a_2\xi_1 &+& 0 &+& a_3\xi_3 &+& \ldots &+& a_{n-2}\xi_{n-2} &+& a_{n-1}\xi_{n-1} \\
\mu_{n-1} &=& a_1\xi &+& 0 &+& a_2\xi_2 &+& a_3\xi_3 &+& \ldots &+& a_{n-2}\xi_{n-2} &+& a_{n-1}\xi_{n-1} \\
\mu_n &=& a_1\xi &+& a_1\xi_1 &+& a_2\xi_2 &+& a_3\xi_3 &+& \ldots &+& a_{n-2}\xi_{n-2} &+& a_{n-1}\xi_{n-1}
\end{array}}
$$

\medskip
Notice that $\mu_1\simeq\ldots\simeq\mu_n$ because the corresponding
strings of coefficients are all equivalent to
$\langle a_1,\ldots,a_{n-1}\rangle$.
Moreover, it can be easily checked that the $\mu_i$s are distinct.
To complete the proof, we need to find
suitable coefficients $a_1,\ldots,a_{n-1}$
in such a way that $c_1\mu_1+\ldots+c_n\mu_n=0$.
It is readily seen that this happens if the following conditions are fulfilled:

$$\begin{cases}
(c_1+\ldots+c_n)\cdot a_1 = 0 \\
(c_1+\ldots+c_{n-2})\cdot a_2 + c_n\cdot a_1 = 0 \\
(c_1+\ldots+c_{n-3})\cdot a_3+ (c_{n-1}+c_n)\cdot a_2 = 0 \\
{}\quad\quad\quad \vdots \\
c_1\cdot a_{n-1}+(c_3+\ldots+c_n)\cdot a_{n-2} = 0 \\
(c_1+\ldots+c_n)\cdot a_{n-1} = 0
\end{cases}$$

\medskip
Finally, observe that the first and last equations are trivially
satisfied because of the hypothesis $c_1+\ldots+c_n=0$;
and the remaining $n-2$ equations are satisfied by infinitely many
choices of the coefficients $a_1,\ldots,a_{n-1}$, which can
be taken in $\N$.\footnote
{~Here we actually proved the following result (\cite{dn2} Th.1.2):
\emph{``Let $c_1 X_1+\ldots + c_n X_n=0$ be a diophantine
equation with $c_1+\ldots+c_n=0$ and $n\ge 3$.
Then there exists $a_1,\ldots,a_{n-1}\in\N$
such that for every idempotent ultrafilter $\U$
and for every $A\in a_1\U\oplus \ldots\oplus a_{n-1}\U$
there exist distinct $x_i\in A$ such that
$c_1 x_1+\ldots+c_n x_n=0$"}.}
\end{proof}

\medskip
More results in this direction, including partition regularity
of non-linear diophantine equations, have been recently obtained
by L.~Luperi Baglini (see \cite{lu}).

\bigskip
\section{A model of the hyper-integers}

In this final section we outline a construction for a model
where one can give an interpretation
to all nonstandard notions and principles that were considered
in this paper.

\smallskip
The most used single construction for models of
the hyper-real numbers, and hence of the hyper-natural and
hyper-integer numbers, is the \emph{ultrapower}.\footnote
{~For a comprehensive exposition of nonstandard analysis
grounded on the ultrapower construction, see
R. Goldblatt's textbook \cite{go}.}
Here we prefer to use the purely algebraic construction of \cite{bd},
which is basically equivalent to an ultrapower, but where
only the notion of quotient field of a ring
modulo a maximal ideal is assumed.

\medskip
\begin{itemize}
\item
Consider $\text{Fun}(\N,\R)$, the
ring of real sequences $\varphi:\N\to\R$
where the sum and product operations are defined pointwise.

\smallskip
\item
Let $\mathfrak{I}$ be the ideal of the sequences that
eventually vanish:
$$\mathfrak{I}\ =\ \{\varphi\in\text{Fun}(\N,\R)\mid
\exists k\,\forall n\ge k\ \varphi(n)=0\}.$$

\smallskip
\item
Pick a maximal ideal $\mathfrak{M}$ extending $\mathfrak{I}$,
and define the hyper-real numbers as the quotient field:
$$\hR\ =\ {\text{Fun}(\N,\R)}/\mathfrak{M}.$$

\smallskip
\item
The \emph{hyper-integers} are the subring of $\hR$ determined by the
sequences that take values in $\Z$:
$$\hZ\ =\ {\text{Fun}(\N,\Z)}/\mathfrak{M}\ \subset\ \hR.$$

\smallskip
\item
For every subset $A\subset\R$, its hyper-extension is defined by:
$$\hA\ =\ {\text{Fun}(\N,A)}/\mathfrak{M}\ \subset\ \hR.$$
So, \emph{e.g.}, the \emph{hyper-natural numbers} $\hN$
are the cosets $\varphi+\mathfrak{M}$ of
sequences $\varphi:\N\to\N$ of natural numbers; the hyper-prime numbers
are the cosets of sequences of prime numbers, and so forth.

\smallskip
\item
For every function $f:A\to B$ (where $A,B\subseteq\R$), its
hyper-extension $\hf:\hA\to\hB$ is defined by putting
for every $\varphi:\N\to A$:
$$\hf(\varphi+\mathfrak{M})\ =\ (f\circ\varphi)+\mathfrak{M}.$$

\smallskip
\item
For every sequence $\langle A_n\mid n\in\N\rangle$ of nonempty subsets
of $\R$, its hyper-extension $\langle A_\nu\mid\nu\in\hN\rangle$
is defined by putting for every $\nu=\varphi+\mathfrak{M}\in\hN$:
$$A_\nu\ =\ \{\psi+\mathfrak{M}\mid \psi(n)\in A_{\varphi(n)}\
\text{for all }n\}\ \subseteq\ \hR.$$
\end{itemize}

\medskip
It can be directly verified that $\hR$ is an ordered field whose
positive elements are $\hR^+=\text{Fun}(\N,\R^+)/\mathfrak{M}$.
By identifying each $r\in\R$ with the coset $c_r+\mathfrak{M}$
of the corresponding constant sequence, one obtains that $\hR$ is a
proper superfield of $\R$.
The subset $\hZ$ defined as above is a discretely ordered ring
having all the desired properties.

\smallskip
Remark that in the above model, one can interpret
all notions used in this paper.
We itemize below the most relevant ones.

\smallskip
Denote by $\alpha=\imath+\mathfrak{M}\in\hN$ the infinite
hyper-natural number corresponding to the identity sequence $\imath:\N\to\N$.

\medskip
\begin{itemize}
\item
The nonempty \emph{internal sets} $B\subseteq\hR$ are the sets of the form $B=A_\alpha$
where $\langle A_n\mid n\in\N\rangle$ is a sequence of nonempty sets.
When all $A_n$ are finite, $B=A_\alpha$ is called \emph{hyper-finite};
and when all $A_n$ are infinite, $B=A_\alpha$ is called \emph{hyper-infinite}.\footnote
{~It is proved that any internal set $A\subseteq\hR$ is either
hyper-finite or hyper-infinite.}

\smallskip
\item
If $B=A_\alpha$ is the hyper-finite set corresponding
to the sequence of nonempty finite sets $\langle A_n\mid n\in\N\rangle$,
then its \emph{internal cardinality} is defined by setting
$\|B\|=\vartheta+\mathfrak{M}\in\hN$
where $\vartheta(n)=|A_n|\in\N$ is the sequence of cardinalities.

\smallskip
\item
If $\varphi,\psi:\N\to\Z$ and
the corresponding hyper-integers
$\nu=\varphi+\mathfrak{M}$ and $\mu=\psi+\mathfrak{M}$
are such that $\nu<\mu$,
then the (internal) interval $[\nu,\mu]\subseteq\hZ$ is defined as $A_\alpha$
where $\langle A_n\mid n\in\N\rangle$ is any sequence of sets
such that $A_n=[\varphi(n),\psi(n)]$ whenever $\varphi(n)<\psi(n)$.\footnote
{~One can prove that this definition is well-posed. Indeed,
if $\varphi+\mathfrak{M}<\psi+\mathfrak{M}$ and
$\langle A_n\mid n\in\N\rangle$ and $\langle A'_n\mid n\in\N\rangle$
are two sequences of nonempty sets such that $A_n=A'_n$ whenever
$\varphi(n)<\psi(n)$, then $A_\alpha=A'_\alpha$.}
\end{itemize}

\medskip
In full generality, one can show that the \emph{transfer}
principle holds. To show this in a rigorous manner, one needs
first a precise definition of ``elementary property", which requires
the formalism of first-order logic. Then, by using a procedure
known in logic as ``induction on the complexity of formulas",
one proves that the equivalences
$P(A_1,\ldots,A_k,f_1,\ldots,f_h)\Leftrightarrow
P(\hA_1,\ldots,\hA_k,{}^*f_1,\ldots,{}^*f_h)$
hold for all elementary properties $P$, sets $A_i$,
and functions $f_j$.

\smallskip
Remark that all the nonstandard definitions given in this paper are actually
equivalent to the usual ``standard" ones. As examples,
let us prove some of those equivalences in detail.

\smallskip
Let us start with the definition of a \emph{thick set} $A\subseteq\Z$.
Assume first that there exists a sequence
of intervals $\langle\, [a_n,a_n+n]\mid n\in\N\,\rangle$ which
are included in $A$. If $\langle\, [a_\nu,a_\nu+\nu]\mid \nu\in\hN\,\rangle$
is its hyper-extension then, by \emph{transfer}, every
$[a_\nu,a_\nu+\nu]\subseteq\hA$, and hence $\hA$ includes infinite intervals.
Conversely, assume that $A$ is not thick and pick $k\in\N$ such
that for every $x\in\Z$ the interval $[x,x+k]\nsubseteq A$. Then, by \emph{transfer},
for every $\xi\in\hZ$ the interval $[\xi,\xi+k]\nsubseteq\hA$, and hence
$\hA$ does not contain any infinite interval.

\smallskip
We now focus on the nonstandard definition of \emph{upper Banach density}.
Let $\text{BD}(A)\ge\beta$. Then for every $k\in\N$, there exists an
interval $I_k\subset\Z$ of length $|I_k|\ge k$ and such that
$|A\cap I_k|/|I_k|>\beta-1/k$.
By \emph{overflow}, there exists an infinite $\nu\in\hN$
and an interval $I\subset\hZ$ of internal cardinality $\|I\|\ge\nu$
such that the ratio $\|\hA\cap I\|/\|I\|\ge\beta-1/\nu\sim\beta$.
Conversely, let $I$ be an infinite interval such that
$\|\hA\cap I\|/\|I\|\sim\beta$. Then, for every given $k\in\N$,
the following property holds:
``There exists an interval $I\subset\hZ$ of length $\|I\|\ge k$
and such that $\|\hA\cap I\|/\|I\|\ge\beta-1/k$". By
\emph{transfer}, we obtain the existence of an interval
$I_k\subset\Z$ of length $|I_k|\ge k$ and such that
$|A\cap I_k|/|I_k|\ge\beta-1/k$. This shows that $\text{BD}(A)\ge\beta$,
and the proof is complete.

\smallskip
Let us now turn to \emph{finite embeddability}.
Assume that $X\fe Y$, and enumerate $X=\{x_n\mid n\in\N\}$.
By the hypothesis, $\bigcap_{i=1}^n(Y-x_i)\ne\emptyset$ for every $n\in\N$ and so,
by \emph{overflow}, there exists an infinite $\mu\in\hN$
such that the hyper-finite intersection
$\bigcap_{i=1}^\mu(\hY-x_i)\ne\emptyset$.
If $\nu$ is any hyper-integer in that intersection,
then $\nu+X\subseteq\hY$.
Conversely, let us assume that $\nu+X\subseteq\hY$ for a
suitable $\nu\in\hZ$.
Then for every finite $F=\{x_1,\ldots,x_k\}\subset X$
one has the elementary property:
``$\exists \nu\in\hZ\ (\nu+x_1\in\hY\ \&\ \ldots\ \&\ \nu+x_k\in\hY)$".
By \emph{transfer}, it follows that
``$\exists t\in\Z\ (t+x_1\in Y\ \&\ \ldots\ \&\ t+x_k\in Y)$",
\emph{i.e.} $t+F\subseteq Y$.\footnote{
~For the equivalence of the nonstandard definition
of partition regularity of an equation, one
needs a richer model than the one presented here.
Precisely, one needs the so-called
$\mathfrak{c}^+$-\emph{enlargement property},
that can be obtained in models of the form
$\hR={\text{Fun}(\R,\R)}/\mathfrak{M}$ where $\mathfrak{M}$
is a maximal ideals of a special kind (see \cite{bd}).}

\smallskip
We finish this paper with a few suggestions for further readings.
A rigorous formulation and a detailed proof of the
\emph{transfer principle} can be found in Ch.4 of the
textbook \cite{go}, where the \emph{ultrapower} model is considered.\footnote{
~Remark that our algebraic model is basically equivalent to
an ultrapower. Indeed, for any maximal ideal
$\mathfrak{M}$ of the ring $\text{Fun}(\N,\R)$, the family
$\U=\{Z(\varphi)\mid \varphi\in\mathfrak{M}\}$ where
$Z(f)=\{n\in\N\mid\varphi(n)=0\}$ is an ultrafilter on $\N$.
By identifying each coset $\varphi+\mathfrak{M}$ with the corresponding
$\U$-equivalence class $[\varphi]$, one obtains
that the quotient field ${\text{Fun}(\N,\R)}/\mathfrak{M}$
and the ultrapower $\R^\N/\U$ are essentially the same object.}
See also \S 4.4 of \cite{ck} for the foundations of
nonstandard analysis in its full generality.
A nice introduction of nonstandard methods
for number theorists, including a number of examples,
is given in \cite{jincant} (see also \cite{jin0}).
Finally, a full development of nonstandard analysis can be
found in several monographies of the existing literature;
see \emph{e.g.} the classical H.J. Keisler's book \cite{kei},
or the comprehensive collections of surveys in \cite{nato}.

\bigskip

\end{document}